\documentclass[a4paper,10pt]{article}
\usepackage{amssymb,amsmath,amsthm}
\usepackage{fullpage}
\usepackage{hyperref}
\usepackage{eucal}
\usepackage{mathrsfs}
\usepackage{color}
\usepackage{stackrel}
\usepackage{graphicx}

\newcommand{\eg}{\emph{e.g.}}

\newcommand{\Real}{\mathbb{R}}

\newcommand{\der}{\mathrm{d}}

\newtheorem{Theorem}{Theorem}
\newtheorem{Lemma}{Lemma}
\newtheorem{Proposition}{Proposition}

\theoremstyle{definition}

\def\OMIT#1{}
\usepackage[normalem]{ulem}
\definecolor{DarkGreen}{rgb}{0,0.5,0.1} % David

\newcommand\soutD{\bgroup\markoverwith
{\textcolor{DarkGreen}{\rule[.5ex]{2pt}{1pt}}}\ULon}
\newcommand\soutP{\bgroup\markoverwith
{\textcolor{blue}{\rule[.5ex]{2pt}{1pt}}}\ULon}
\newcommand{\Hm}[1]{\leavevmode{\marginpar{\tiny%
$\hbox to 0mm{\hspace*{-0.5mm}$\leftarrow$\hss}%
\vcenter{\vrule depth 0.1mm height 0.1mm width \the\marginparwidth}%
\hbox to
0mm{\hss$\rightarrow$\hspace*{-0.5mm}}$\\\relax\raggedright #1}}}

\begin{document}
% 
%-------%
% TITLE %
%-------%
%------------------------------------------%
%------------------------------------------%

%
%\maketitle
%
%------------------------------------------%
%------------------------------------------%
\title{A reverse Faber--Krahn inequality for the Robin Laplacian with negative boundary parameter: 
small coupling in all dimensions}
\author{Nunzia Gavitone,$^a$ \
David Krej\v{c}i\v{r}{\'\i}k\,$^b$
\ and \ Gloria Paoli\,$^a$}
\date{\small 
\begin{quote}
\emph{
\begin{itemize}
\item[$a)$]  
Dipartimento di Matematica e Applicazioni ``Renato Caccioppoli'', 
Universit\`a degli Studi di Napoli Federico II, 
Via Cintia, Monte S. Angelo, 80126 Napoli, Italy;
nunzia.gavitone@unina.it, gloria.paoli@unina.it.
\item[$b)$] 
Department of Mathematics, Faculty of Nuclear Sciences and 
Physical Engineering, Czech Technical University in Prague, 
Trojanova 13, 12000 Prague 2, Czechia;
david.krejcirik@fjfi.cvut.cz.%
\end{itemize}
}
\end{quote}
20 June 2026
}
\maketitle
%
%------------------------------------------%
%------------------------------------------%

%
\begin{abstract}
\noindent
We establish Bareket's conjecture from 1977 
for convex domains in all dimensions
in the regime of weak boundary coupling. 
In other words, we consider the Laplace operator,
subject to negative boundary conditions, 
and show that the ball maximises the first eigenvalue
among all bounded convex domains of fixed volume,
provided that the boundary parameter is sufficiently close to zero. 
The smallness depends on the volume and dimension only.
The proof relies on a comparison with spherical shells 
with combined Neumann--Robin boundary conditions 
obtained via the method of parallel coordinates,
which we manage to extend to all dimensions,
and on a careful analysis of the corresponding radial problem.
%
%\bigskip
%\begin{itemize}
%\item[\textbf{Keywords:}]
%\item[\textbf{MSC (2010):}]
%\end{itemize}
%
\end{abstract}
%
%------------------------------------------%
%------------------------------------------%
 
%---------------------%
\section{Introduction}
%---------------------%
%
The circular drum produces the lowest fundamental tone. Or, more generally, among all domains of given volume, the ball minimises the first eigenvalue of the Dirichlet Laplacian.
In a quantum-mechanical language,
the ground-state energy of an electron 
constrained to a nanostructure of given amount of material 
is minimised by the circular geometry.
This is certainly the most famous spectral shape optimisation result,
conjectured by Lord Rayleigh in 1877
and proved independently by Faber and Krahn 
almost 50 years later.  

What happens for other boundary conditions?
Is the ball still the optimal geometry?
The physical relevance as well as mathematical curiosity 
motivates the boundary value problem
\begin{equation}\label{problem}
\left\{
\begin{aligned}
  -\Delta u &=\lambda u 
  && \mbox{in} \quad \Omega \,, \\
  \frac{\partial u}{\partial n} + \alpha \;\! u &=0  
  && \mbox{on} \quad \partial\Omega \,, 
\end{aligned}  
\right.
\end{equation}
where $\alpha \in \Real$ and~$n$ is the outward unit normal 
to a bounded Lipschitz domain $\Omega \subset \Real^d$ with $d \geq 1$.
The case of positive (respectively, negative)~$\alpha$ corresponds
to a strongly localised repulsive (respectively, attractive) 
confining potential on the boundary.
The Dirichlet realisation (formally, $\alpha=\infty$) 
corresponds to hard-wall boundary conditions.
In any case, there exists an infinite sequence of 
discrete eigenvalues of~\eqref{problem}
accumulating at infinity. The ground-state energy is 
the lowest eigenvalue that we denote by~$\lambda_1^\alpha(\Omega)$. 

The case of positive~$\alpha$ is well understood. 
In 1986, Bossel~\cite{Bossel_1986} established 
the Faber--Krahn-type inequality 
$\lambda_1^\alpha(\Omega) \geq \lambda_1^\alpha(B_R)$
valid for $\alpha>0$
and planar domains~$\Omega$ of the same area as the disk~$B_R$
(of positive radius~$R$, centred at the origin without loss of generality).   
The extension to higher dimensions was achieved 
by Daners in 2006~\cite{Daners_2006},
following the original idea of Bossel's. 
In 2010,
Bucur and Giacomini~\cite{Bucur-Giacomini_2010}
proposed an alternative proof based on the theory of special functions 
of bounded variation. 
In 2023,
Alvino, Nitsch, Trombetti \cite{Alvino-Nitsch-Trombetti}
developed yet another proof in two dimensions 
based on a Talenti-type comparison principle.

The case of negative~$\alpha$ remains mysterious.
In 1977, Bareket~\cite{Bareket_1977} raised the conjecture 
that the reverse Faber--Krahn-type inequality 
$\lambda_1^\alpha(\Omega) \leq \lambda_1^\alpha(B_R)$
holds for $\alpha<0$
and all planar domains~$\Omega$ of the same area as the disk~$B_R$.  
Moreover, she proved it for domains close to the disk
provided that~$|\alpha|$ is small. 
Higher-dimensional extensions were considered
by Ferone, Nitsch and Trombetti in 2015~\cite{Ferone-Nitsch-Trombetti_2015}.
In the same year, Freitas and one of the present authors~\cite{FK7}
disproved the conjecture in all non-trivial dimensions $d \geq 2$,
by comparing the eigenvalue asymptotics in the ball
and spherical shells in the limit $\alpha \to -\infty$.
Moreover, they proved that Bareket's conjecture actually holds
for all planar domains~$\Omega$
provided that~$|\alpha|$ is small,
with the smallness depending on the area~$|\Omega|$ only.   
It is still believed 
(and supported by numerical experiments~\cite{AFK})
that the ball is the maximiser within the class
of simply connected domains if $d=2$
and convex domains if $d \geq 2$ 
(in dimensions $d\geq 3$, it is not enough to assume that
the domain is simply connected~\cite{Ferone-Nitsch-Trombetti_2016}).
In particular, it was conjectured in~\cite[Conj.~1]{AFK}
that there exists a negative number~$\alpha_0$,
depending solely on the volume~$|\Omega|$ and dimension~$d$, 
that the reverse Faber--Krahn-type inequality 
$\lambda_1^\alpha(\Omega) \leq \lambda_1^\alpha(B_R)$
holds for all domains $\Omega\subset\Real^d$ 
and every $\alpha \in [\alpha_0,0]$.
 
In this paper, we do not prove Bareket's conjecture in its full generality.
However, we support its validity by extending the positive result of~\cite{FK7}
to higher dimensions. 
In particular, we establish~\cite[Conj.~1]{AFK}
in the special case of convex domains, in all dimensions.
\begin{Theorem}\label{Thm.main}
Let $\Omega \subset \Real^d$ be a bounded convex domain with $d \geq 1$. 
There exists a negative number~$\alpha_0$,
depending only on the volume~$|\Omega|$ and dimension~$d$,
such that 
\begin{equation} 
  \forall \alpha \in [\alpha_0,0] \,, \qquad
  \lambda_1^\alpha(\Omega) \leq \lambda_1^\alpha(B_R)
  \,,
\end{equation}
where~$B_R$ is the ball of the same volume as~$\Omega$.
\end{Theorem}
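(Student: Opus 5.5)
The strategy follows the abstract: compare $\Omega$ with a spherical shell through parallel coordinates, then analyse the radial problem on shells. Set $\alpha<0$ and let $A_{R_1,R_2}=\{R_1<|x|<R_2\}$ be the shell with $|A_{R_1,R_2}|=|\Omega|$ and outer boundary area $|\partial B_{R_2}|=|\partial\Omega|$. By the isoperimetric inequality $|\partial\Omega|\ge|\partial B_R|$, so $R_2\ge R$ and the shell is well defined ($R_1=0$ when $\Omega$ is a ball). On the shell, impose the Robin condition with parameter $\alpha$ on the outer sphere and a Neumann condition on the inner sphere. Call the first eigenvalue $\mu_1^\alpha(R_1,R_2)$.

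\textbf{Step 1: parallel-coordinate comparison.} I will show $\lambda_1^\alpha(\Omega)\le\mu_1^\alpha(R_1,R_2)$. Take the radial ground state $\phi(|x|)$ of the shell problem; it is positive and, for $\alpha<0$, increasing in $|x|$. Transplant it to $\Omega$ by the test function $u(x)=\phi(R_2-\rho(x))$, where $\rho(x)=\mathrm{dist}(x,\partial\Omega)$. Then $|\nabla u|=|\phi'|$ a.e., and $u=\phi(R_2)$ on $\partial\Omega$, so the boundary term equals $\alpha\phi(R_2)^2|\partial\Omega|$, which is exactly the boundary term for the shell. Write the bulk integrals via the coarea formula with $\ell(t)=\mathcal H^{d-1}(\{\rho=t\})$. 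For convex $\Omega$, the inner parallel sets are convex, so the Brunn--Minkowski / Steiner theory makes $\ell$ decreasing and gives $\ell(t)\le|\partial\Omega|\,(1-t/R_2)^{d-1}$, the shell's area function. This is the step that has to be extended to all $d$.

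The comparison of Rayleigh quotients then runs as in the Payne--Weinberger/Freitas--Krejčiřík argument. The numerator $\int|\phi'|^2\ell$ and denominator $\int\phi^2\ell$ both involve $\ell$, so their ratio has to be controlled. I will use the identity $\int(|\phi'|^2-\mu\phi^2)\,s^{d-1}\,ds=\alpha$-boundary terms. The aim is to reduce to showing that $t\mapsto(|\phi'|^2-\mu\phi^2)(R_2-t)$ has a sign structure such that replacing the weight $s^{d-1}$ by the smaller $\ell$ can only decrease the quotient. This can be done using a monotone rearrangement argument, i.e.\ with $\ell/\text{(shell weight)}$ decreasing in $t$. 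The inner volume of $\Omega$ is $|\Omega|$, matching the shell, which keeps the range of $t$ consistent.

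\textbf{Step 2: radial analysis, the main obstacle.} I must show $\mu_1^\alpha(R_1,R_2)\le\lambda_1^\alpha(B_R)$ whenever the shell has volume $|B_R|$ and $\alpha\in[\alpha_0,0]$, with $\alpha_0$ depending only on $|\Omega|$ and $d$. I plan a perturbative expansion for small $|\alpha|$, using the first-order terms
\[
\mu_1^\alpha(R_1,R_2)=\alpha\frac{|\partial B_{R_2}|}{|\Omega|}+O(\alpha^2),\qquad
\lambda_1^\alpha(B_R)=\alpha\frac{|\partial B_{R}|}{|\Omega|}+O(\alpha^2).
\]
Since $R_2\ge R$ and $\alpha<0$, the leading terms already have the right order. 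The difficulty is uniformity: the $O(\alpha^2)$ remainder must be bounded independently of the shell, including thin shells where $R_2\to\infty$ and $R_1\to R_2$. Near $R_2=R$ the leading gap vanishes, so a second-order comparison is needed there as well.

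To handle this, I would use the variational bound $\mu_1^\alpha\le\alpha|\partial B_{R_2}|/|\Omega|$, taken with the constant test function. I would combine it with a lower bound $\lambda_1^\alpha(B_R)\ge\alpha|\partial B_R|/|B_R|-C\alpha^2$, with $C=C(R,d)$, obtained from an explicit Bessel-function expansion or a trace inequality. This gives the conclusion whenever $(|\partial B_{R_2}|-|\partial B_R|)/|\Omega|\ge C|\alpha|$. In the complementary regime, where $R_2$ is close to $R$ and hence $R_1$ is small, I would compare the two radial ODEs directly. Differentiating $\mu_1^\alpha$ in $R_1$ along the constant-volume curve should give a Hadamard-type formula $\partial\mu/\partial R_1\propto-\phi(R_1)^2(\ldots)$ of negative sign for small $|\alpha|$, which yields $\mu_1^\alpha\le\mu_1^\alpha(0,R)=\lambda_1^\alpha(B_R)$.
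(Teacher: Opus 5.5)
Your plan is viable and has the paper's architecture: Step~1 is exactly Lemma~\ref{Lem.PW}, and Step~2 is the uniform radial comparison.

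In Step~1 your appeal to Brunn--Minkowski does work, and it is more direct than the paper's route. The inclusion $\Omega_t+tB_1\subseteq\Omega$, monotonicity of perimeter under inclusion of convex sets, and the Brunn--Minkowski inequality for $W_1$ give $\ell(t)^{1/(d-1)}\le|\partial\Omega|^{1/(d-1)}-(d|B_1|)^{1/(d-1)}t$. This is $\ell\le w:=|\partial\Omega|(1-t/R_2)^{d-1}$, the paper's $|\partial E_t|\le|\partial B_t|$, which the paper obtains from a differential inequality and an ODE comparison.

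Step~2 genuinely differs. Your first regime is the paper's case $R_1\ge R$, only with an $\alpha$-dependent threshold. Near the ball the paper never varies the shell. It plugs the ball eigenfunction, continued by $\phi(R)$ on $[R,R_2]$, into the shell quotient. Using \eqref{eq:ball-radial-identity}, $\int_R^{R_2}r^{d-1}\,\der r=R_1^d/d$ and Proposition~\ref{estimates_radial}, it bounds the numerator by $|\alpha|R_1^d\phi(R)^2\bigl(-\tfrac{(d-1)2^{-1/d}}{dR}+|\alpha|\phi(R)^2\bigr)$, so only one fixed function is ever estimated. Your route proves more, namely monotonicity of $R_1\mapsto\mu_1^\alpha(A_{R_1,R_2})$ along the constant-volume family. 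The price is differentiability in $R_1$, continuity as $R_1\to0$, and bounds that are uniform over shells.

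Two points need repair.

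\textbf{The test function is not defined on all of $\Omega$.} Since $|\Omega|=\int_0^{r_\Omega}\ell$ and $\ell\le w$, you get $r_\Omega\ge R_2-R_1$, generally strictly. For a square of side~$2$, $r_\Omega=1$ while $R_2-R_1\approx0.68$. So $\phi(R_2-\rho)$ is undefined on part of $\Omega$, and the $t$-ranges do \emph{not} match. Set $u=\phi(R_1)$ on $\{\rho\ge R_2-R_1\}$, as the paper does. The extra region then adds $|\mu|\phi(R_1)^2\int_{R_2-R_1}^{r_\Omega}\ell>0$ to $\int_\Omega(|\nabla u|^2-\mu u^2)$. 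You absorb it using $\int_{R_2-R_1}^{r_\Omega}\ell=\int_0^{R_2-R_1}(w-\ell)$, $\phi\ge\phi(R_1)$, and positivity of $\phi'^2+|\mu|\phi^2$. That positivity, which comes from $\mu<0$, is all the ``sign structure'' needed; monotonicity of $\ell/w$ is not.

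\textbf{The guessed derivative is the wrong one.} The form $-\phi(R_1)^2(\ldots)$ is only the partial derivative at fixed $R_2$. It is negative for \emph{every} $\alpha<0$, so it cannot see the small-coupling restriction. Along $R_2^d-R_1^d=R^d$ the outer sphere moves too, and the variational (Hadamard) formula gives
\[
\frac{\der\mu}{\der R_1}=\frac{R_1^{d-1}}{\int_{R_1}^{R_2}\phi^2r^{d-1}\,\der r}\Bigl[\,|\mu|\bigl(\phi(R_2)^2-\phi(R_1)^2\bigr)-\phi(R_2)^2\Bigl(\frac{(d-1)|\alpha|}{R_2}+\alpha^2\Bigr)\Bigr].
\]
The $|\mu|\phi^2$ terms from the two spheres cancel to first order. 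Negativity comes from the outer curvature term $(d-1)|\alpha|/R_2$, provided two bounds hold uniformly for, say, $R_1\le R$: $|\mu|\le C(R,d)|\alpha|$ and $\phi(R_2)^2-\phi(R_1)^2\le|\mu|\phi(R_2)^2(R_2^2-R_1^2)/d$. Both follow by integrating the radial ODE as in Proposition~\ref{estimates_radial}, with a continuity-in-$\alpha$ argument for the bound on $|\mu|$. They must be proved, though; as written, the sign is asserted, not derived.
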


The validity of this theorem in $d \geq 3$ has been open
at least since 2015, when the planar situation was settled in~\cite{FK7}.  
(The case $d=1$ is trivial.)
The problem was the unavailability of the method of parallel coordinates
(as originally developed by Payne and Weinberger~\cite{Payne-Weinberger_1961}
in the Dirichlet case) in higher dimensions. 
In fact, it is explicitly stated in~\cite{FK5} 
that the proof of the results of~\cite{Payne-Weinberger_1961} 
``does not seem to have a straightforward extension to higher dimensions'' 
(see~\cite[p.~8]{FK5}).
In this paper, we manage to establish this higher-dimensional extension
by being inspired by the isoperimetric approach of
Bucur, Ferone, Nitsch and Trombetti from 2019
\cite{Bucur-Ferone-Nitsch-Trombetti_2019}
as well as by our recent paper for the torsional rigidity~\cite{GKP}.
More specifically, we prove the following lemma
(where we denote by $|\Omega|$ and~$|\partial\Omega|$
the $d$-dimensional Lebesgue measure of~$\Omega$ 
and the $(d-1)$-dimensional Hausdorff measure of 
the boundary~$\partial\Omega$, respectively). 

\begin{Lemma}\label{Lem.PW}
For any bounded convex domain $\Omega \subset \Real^d$ 
with $d \geq 2$, one has
\begin{equation}\label{eigenvalue_annulus}
  \forall \alpha<0 \,, \qquad
    \lambda_1^{\alpha}(\Omega )\leq \mu_1^{\alpha}(A_{R_1,R_2}),
\end{equation}
where $A_{R_1,R_2} := B_{R_2} \setminus \overline{B_{R_1}}$ 
is the spherical shell such that 
$|\partial B_{R_2}|=|\partial\Omega|$ and $|A_{R_1,R_2}|=|\Omega|$.
Here $\mu_1^{\alpha}(A_{R_1,R_2})$ denotes the lowest eigenvalue 
of the combined Robin--Neumann problem
\begin{equation}\label{problem.annulus}
\left\{
\begin{aligned}
  -\Delta u &=\mu u 
  && \mbox{in} \quad A_{R_1,R_2} \,, \\
  \frac{\partial u}{\partial n} + \alpha \;\! u &=0  
  && \mbox{on} \quad \partial B_{R_2} \,, \\
  \frac{\partial u}{\partial n}  &=0  
  && \mbox{on} \quad \partial B_{R_1} \,.
\end{aligned}  
\right.
\end{equation}
\end{Lemma}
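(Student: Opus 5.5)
The plan is to use the method of parallel coordinates: build a test function on $\Omega$ depending only on the distance to the boundary, and compare its Rayleigh quotient with the radial eigenfunction of \eqref{problem.annulus}. By the variational characterisation,
\[
  \lambda_1^\alpha(\Omega)=\inf_{u\in H^1(\Omega)\setminus\{0\}}
  \frac{\int_\Omega|\nabla u|^2+\alpha\int_{\partial\Omega}u^2}{\int_\Omega u^2},
\]
so it suffices to find one admissible $u$ whose quotient is at most $\mu_1^\alpha(A_{R_1,R_2})$.

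Let $v(r)$, $r\in[R_1,R_2]$, be the radial first eigenfunction of \eqref{problem.annulus}. For $\alpha<0$ I take it positive, and I expect it to be increasing in $r$: it satisfies $(r^{d-1}v')'=-\mu r^{d-1}v$ with $\mu<0$ and $v'(R_1)=0$, so $r^{d-1}v'$ is increasing from $0$. Set $\rho(x)=\operatorname{dist}(x,\partial\Omega)$ and $\rho_M$ the inradius, and define the test function $u(x)=g(\rho(x))$ through a change of variables that matches volumes. Concretely, let $\mathcal A(t)=|\{\rho>t\}|$ and $P(t)=|\{\rho=t\}|$, so that $\mathcal A'(t)=-P(t)$. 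Define $r(t)$ by $|B_{R_2}\setminus B_{r(t)}|=|\Omega|-\mathcal A(t)$, which gives $r(0)=R_2$ and $r(\rho_M)=R_1$. Then put $g(t)=v(r(t))$.

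By the coarea formula and $|\nabla\rho|=1$, the $L^2$ norms agree, since the distributions of $u$ on $\Omega$ and of $v$ on the shell coincide. The boundary terms also agree, because $|\partial\Omega|=|\partial B_{R_2}|$ and $u=v(R_2)$ on $\partial\Omega$. For the gradient I compute $|\nabla u|=|v'(r)|\,|r'(t)|$ with $|r'(t)|=P(t)/(|\mathbb S^{d-1}|r^{d-1})$. This gives
\[
  \int_\Omega|\nabla u|^2=\int_0^{\rho_M}v'(r)^2 r'(t)^2P(t)\,dt
  =\int_{R_1}^{R_2}v'(r)^2\,\frac{P(t(r))}{|\mathbb S^{d-1}|r^{d-1}}\,|\partial B_r|\,dr .
\]
So the claim reduces to the geometric inequality $P(t)\le|\partial B_{r(t)}|$ for all $t$. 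That is, the perimeter of the inner parallel set $\Omega_t=\{\rho>t\}$ should not exceed that of the sphere enclosing a shell of the same volume.

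This geometric step is the main obstacle, since it is exactly what fails to generalise naively from $d=2$. I would follow the approach of \cite{Bucur-Ferone-Nitsch-Trombetti_2019} and \cite{GKP}. The key facts are that the $\Omega_t$ are convex, and that for convex sets the perimeter of inner parallel sets satisfies $-P'(t)\ge$ (the corresponding quantity for balls). This comes from the Steiner/Minkowski quermassintegral inequalities, namely the Aleksandrov--Fenchel inequality $W_2(K)\ge c\,W_1(K)^{(d-2)/(d-1)}$. It gives a differential inequality of the form $\frac{d}{dt}P\le -(d-1)\omega\,P^{(d-2)/(d-1)}$ in terms of the volume variable, which is the same ODE that the shell quantities satisfy with equality. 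Rewriting both $P$ and $|\partial B_r|$ as functions of the enclosed volume $m=|\Omega|-\mathcal A$, they start equal at $m=0$, and a comparison (Gronwall-type) argument yields $P\le|\partial B_r|$ at equal $m$. Regularity needs care: $P$ is only monotone and a.e. differentiable, so I would work with one-sided derivatives or Lipschitz approximation.

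With this inequality in hand, the Rayleigh quotient of $u$ is at most that of $v$ on the shell, which equals $\mu_1^\alpha(A_{R_1,R_2})$. This proves \eqref{eigenvalue_annulus}.
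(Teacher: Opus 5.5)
Your proof is correct, and it differs from the paper's in the choice of parametrisation.

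The paper uses the unit-speed test function $u=\phi(R_2-\rho)$, frozen at $\phi(R_1)$ on $\{\rho\ge R_2-R_1\}$. With this choice, $|\nabla u|$ and $|\phi'|$ coincide on corresponding level sets, and the Dirichlet integrals are compared through the distance-matched inequality $|\partial\Omega_s|\le|\partial B_{R_2-s}|$. The $L^2$ norms, however, are only ordered, $\int_\Omega u^2\le\int_A\phi^2$. So the paper has to show, via the volume constraint, that the minimal level $\phi(R_1)$ is actually reached, and then use the negativity of $\mu_1^\alpha(A_{R_1,R_2})$ to conclude.

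Your volume-matched reparametrisation makes $u$ and $v$ equimeasurable, so the $L^2$ and boundary terms are \emph{equal} and only the gradient term needs comparing. This buys you three things:
\begin{itemize}
\item The geometric input you need, $|\partial\Omega_t|\le|\partial B_{r(t)}|$ at equal removed volume, is weaker than the paper's. Indeed, the paper's inequality gives $|\Omega\setminus\Omega_t|\le|B_{R_2}\setminus B_{R_2-t}|$, hence $r(t)\ge R_2-t$.
\item You use neither the sign of the numerator nor the monotonicity of $v$, so your comparison of Rayleigh quotients does not need $\alpha<0$.
\item There is no plateau and no separate argument that the minimal level is attained.
\end{itemize}

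The price is that the ODE comparison must be run in the volume variable $m=|\Omega\setminus\Omega_t|$. Dividing $-\frac{\der}{\der t}|\partial\Omega_t|\ge(d-1)|\partial B_1|^{\frac{1}{d-1}}|\partial\Omega_t|^{\frac{d-2}{d-1}}$ by $\frac{\der m}{\der t}=|\partial\Omega_t|$ gives $\frac{\der}{\der m}|\partial\Omega_t|^{\frac{d}{d-1}}\le-d\,|\partial B_1|^{\frac{1}{d-1}}$, with equality for the shell. Your sentence about a $\frac{\der}{\der t}$-inequality with exponent $\frac{d-2}{d-1}$ ``in terms of the volume variable'' blurs these two forms, though either one closes the argument.

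There is also a small slip in your display for the Dirichlet integral. After the change of variables it should read $\int_{R_1}^{R_2}v'(r)^2\,P(t(r))^2\,|\partial B_r|^{-1}\,\der r$, so the ratio $P/|\partial B_r|$ enters squared rather than linearly. This leads to the same reduction, so nothing breaks.
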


If $d=2$ and $d=3$, the lemma is due to~\cite{FK7} and~\cite{Vikulova_2022},
respectively.
The main technical achievement of the present paper is the extension 
of the lemma to higher dimensions.
It easily follows from Lemma~\ref{Lem.PW} 
and the eigenvalue asymptotics 
\begin{equation}\label{analytic}
\begin{aligned}
  \mu_1^\alpha(A_{R_1,R_2}) 
  &= \frac{|\partial B_{R_2}|}{|A_{R_1,R_2}|}
  \, \alpha + O(\alpha^2) \,,
  \\
  \lambda_1^\alpha(B_R) 
  &= \frac{|\partial B_{R}|}{|B_{R}|}
  \, \alpha + O(\alpha^2) \,,
\end{aligned}
\qquad \mbox{as} \quad \alpha \to 0,
\end{equation}
that Theorem~\ref{Thm.main} holds
with the critical constant~$\alpha_0$ which additionally depends
on the perimeter~$|\partial\Omega|$.
A more refined analysis of the radially symmetric situations 
is needed to get rid of this extra dependence. 
In the planar situation of~\cite{FK7}, 
this was achieved by analysing the explicit solutions 
in terms of Bessel functions.  
In the present paper, we provide a methodologically new approach.
What is more, it is our belief that the validity of Lemma~\ref{Lem.PW}
and our proof of it will be appreciated by the community
in other circumstances.

The organisation of this paper is as follow.
In Section~\ref{Sec.pre} we recall basic facts about convex sets
and derive auxiliary results about the radial problems in balls 
and spherical shells.
The proofs of Theorem~\ref{Thm.main} and Lemma~\ref{Lem.PW}
are presented in Section~\ref{Sec.proofs}. 

%-----------------------%
\section{Preliminaries}\label{Sec.pre}
%-----------------------%
%
 
\subsection{Elements of convex analysis}
Let us start by recalling some properties of convex bodies, 
which will be useful in the sequel (we refer to~\cite{schneider}
for more details). 
Let $\emptyset\neq \Omega\subseteq\mathbb{R}^d$ with $d \geq 2$
be a bounded convex domain. 
We define the \emph{outer parallel body} of~$\Omega$ 
at distance $t \geq 0$ as the  Minkowski sum
$$ 
\Omega+t B_1=\{ x+ t y\in\mathbb{R}^d:\  x\in \Omega,\;y\in B_1 \}.
$$ 
The Steiner formula asserts that
\begin{equation}\label{general_steiner}
|\Omega+ t B_1|=\sum_{i=0}^{d}\binom{d}{i} \, W_i(\Omega) \, t^i
\,, 
\end{equation}
where the coefficients $W_i(\Omega)$ are usually called \emph{quermassintegrals}.
For smooth~$K$, one has $W_2(\Omega) = \frac{1}{d} \int_{\partial K} H d\mathcal{H}^{D-1}$,
where $H$ denotes the mean curvature of~$\partial \Omega$. The Aleksandrov--Fenchel inequalities state 
\begin{equation}\label{aleksandrov-fenchel}
	\left(\dfrac{W_j(\Omega)}{|B_1|}\right)^{\frac{1}{d-j}}
	\geq \left(\dfrac{W_i(\Omega)}{|B_1|}\right)^{\frac{1}{d-i}}
	\qquad\mbox{for}\qquad
	0\leq i<j<d \,,
\end{equation}
with equality if, and only if, $\Omega$~is a ball.  
In the case  $i=0$ and $j=1$, 
we obtain the classical isoperimetric inequality
\begin{equation*}%\label{classical_iso}
|\partial \Omega|^{\frac{d}{d-1}}
\geq d^{\frac{d}{d-1}} \, |B_1|^{\frac{1}{d-1}} \, |\Omega|
\,. 
\end{equation*}

Let $\rho(x) := \inf_{y \in \partial \Omega} |x-y|$
denote the distance from the boundary of~$\Omega$ 
to a point $x \in \Omega$.
The maximal distance from the boundary, 
$r_{\Omega} := \sup_{x \in \Omega} \rho(x)$,
is called the inradius of~$\Omega$.
For every $t \in[0,r_{\Omega}]$, we consider the super-level sets of the distance function 
\begin{equation*}%\label{def_omega_t}
\Omega_{t}:=\{ x\in \Omega:\ \rho(x)>t  \} 
\,,
\end{equation*}
We recall the following lemmas, whose proof can be found, 
for instance, in \cite{Bucur-Ferone-Nitsch-Trombetti_2019,paoli_trani_piscitelli}.

\begin{Lemma}
    Let $\Omega$ be a bounded, convex, open set in $\mathbb{R}^d$. Then, for almost every $t\in(0,r_{\Omega} )$, we have
\begin{equation*}\label{derivative_perimeter}
		-\dfrac{d}{dt}P(\Omega_{t})\geq d(d-1)W_2(\Omega_{t})
	\end{equation*}
	and equality holds if $\Omega$ is a ball.
\end{Lemma}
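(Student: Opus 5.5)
The plan is to realise the inner parallel sets $\Omega_t$ as Minkowski-type summands of one another and differentiate a Steiner-type formula. The key structural fact is that for a convex body, $\Omega_t + sB_1 \subseteq \Omega_{t-s}$ for $0\le s\le t$. Hence, for $s\ge 0$ small,
\[
|\Omega_{t-s}| \geq |\Omega_t + sB_1|
= |\Omega_t| + s\,P(\Omega_t) + \binom{d}{2} W_2(\Omega_t)\, s^2 + O(s^3),
\]
by the Steiner formula~\eqref{general_steiner}, using $dW_1 = P$. I would apply the same inclusion at the perimeter level rather than the volume level. Since $\Omega_t+sB_1$ and $\Omega_{t-s}$ are convex and nested, monotonicity of perimeter under inclusion of convex sets gives
\[
P(\Omega_{t-s}) \geq P(\Omega_t+sB_1) = \frac{d}{ds}|\Omega_t+sB_1| = P(\Omega_t) + d(d-1)W_2(\Omega_t)\, s + O(s^2).
\]

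Next I would use that $t\mapsto P(\Omega_t)$ is nonincreasing. Indeed, the $\Omega_t$ are nested convex sets, so perimeter is monotone, and therefore the function is differentiable almost everywhere. At a point $t$ of differentiability, rearranging the inequality above gives
\[
\frac{P(\Omega_{t-s})-P(\Omega_t)}{s} \ge d(d-1)W_2(\Omega_t) + O(s).
\]
Letting $s\downarrow 0$, the left side tends to $-\frac{d}{dt}P(\Omega_t)$, which is exactly the claimed inequality.

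It remains to justify the inclusion $\Omega_t+sB_1\subseteq\Omega_{t-s}$. Take $x\in\Omega_t$ and $|y|<s$. Then $\rho(x+y)\ge \rho(x)-|y|>t-s$, because $\rho$ is $1$-Lipschitz. This step is elementary and needs no convexity. Convexity enters in two places: the $\Omega_t$ are convex, since $\rho$ is concave on a convex domain, and perimeter is monotone under inclusion of convex sets, which follows from Cauchy's formula or the nearest-point projection being $1$-Lipschitz. I would also check that the $O(s^2)$ remainder is uniform. This is harmless, because the Steiner polynomial of the fixed set $\Omega_t$ has bounded coefficients for $t<r_\Omega$.

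For the equality case, take $\Omega=B_R$. Then $\Omega_t=B_{R-t}$ and $P(\Omega_t)=d|B_1|(R-t)^{d-1}$, so $-\frac{d}{dt}P = d(d-1)|B_1|(R-t)^{d-2}$. On the other hand $W_2(B_r)=|B_1|r^{d-2}$, so the two sides agree. The main obstacle is making sure that the one-sided difference quotient (with $t-s$, $s>0$) matches the derivative at almost every $t$. Monotonicity settles this: the function is differentiable almost everywhere, and at such points both one-sided limits coincide.
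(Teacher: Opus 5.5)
Your proof is correct. It is essentially the standard argument from the sources the paper cites for this lemma; the paper itself gives no proof and defers to \cite{Bucur-Ferone-Nitsch-Trombetti_2019,paoli_trani_piscitelli}. That argument uses the inclusion $\Omega_t+sB_1\subseteq\Omega_{t-s}$, monotonicity of the perimeter (i.e.\ of $W_1$) under inclusion of convex sets, the Steiner expansion of $P(\Omega_t+sB_1)$, and a.e.\ differentiability of the nonincreasing map $t\mapsto P(\Omega_t)$.

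As a minor simplification, the higher-order Steiner terms $i\binom{d}{i}W_i(\Omega_t)s^{i-1}$ for $i\ge 3$ are all nonnegative. So you get $P(\Omega_{t-s})-P(\Omega_t)\ge d(d-1)W_2(\Omega_t)\,s$ exactly, and no remainder bookkeeping is needed.
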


\begin{Lemma}
\label{lem_der_per_e}
Let $\Omega \subset \Real^d$ with $d \geq 2$
be a bounded convex domain. 
In addition, let $f:[0,+\infty)\to[0,+\infty) $  be a non-increasing $C^1$ function.   %and let $\tilde f:[0,+\infty)\to[0,+\infty) $  a non increasing  $C^1$ function. 
    Define  
	\begin{equation*}
	u(x):=f(\rho(x))
	\qquad \mbox{and} \qquad 
	E_{t}:=\{x\in \Omega\;:\; u(x)<t\}. 
	\end{equation*}
	Then, for a.e. regular value \(t\)
\begin{equation}\label{derivata_composta_super1}
		\dfrac{\der}{\der t}
		|\partial E_{t}|
		\geq d (d-1) \, \dfrac{W_2(E_{t})}{|D u|_{u=t}}
		\,
	\end{equation}
with equality holding if $E_t$ is a ball.
	%and 
		%\begin{equation}\label{derivata_composta_sub1}
	%	\dfrac{d}{dt}P(\tilde E_{0,t})\geq n (n-1)\dfrac{W_2(\tilde E_{0,t})}{|D \tilde u|_{\tilde u=t}}.
	%\end{equation}
\end{Lemma}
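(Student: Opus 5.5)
The plan is to reduce this to the preceding lemma on $-\frac{\der}{\der t}P(\Omega_t)$ by a change of variables, since for $u=f(\rho)$ the sublevel sets $E_t$ are exactly inner parallel sets $\Omega_s$.

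\emph{Step 1: identify $E_t$.} I would first show that regular values can be read off from $f$. Since $|\nabla\rho|=1$ a.e. in $\Omega$, we have $|Du|=|f'(\rho)|$ a.e. Hence if $t$ is a regular value, then $f'(s)\neq 0$ at every $s$ with $f(s)=t$. Because $f$ is non-increasing and $C^1$, such an $s$ is unique, and $f$ is strictly decreasing near it, so $s=s(t)=f^{-1}(t)$. It follows that
\[
E_t=\{x\in\Omega:\ f(\rho(x))<t\}=\{x\in\Omega:\ \rho(x)>s(t)\}=\Omega_{s(t)} .
\]
On the level set $\{u=t\}$ we then have $|Du|=|f'(s(t))|$. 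Near such a $t$, the map $t\mapsto s(t)$ is $C^1$ with $s'(t)=1/f'(s(t))=-1/|f'(s(t))|$.

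\emph{Step 2: chain rule.} The sets $\Omega_s$ are convex and decreasing in $s$, so $s\mapsto P(\Omega_s)$ is monotone (non-increasing) and hence differentiable for a.e.\ $s$. The inverse of $s(\cdot)$ is $f$, which is $C^1$ and in particular locally Lipschitz. It therefore maps null sets of $s$-values to null sets of $t$-values. So for a.e.\ regular value $t$, the function $P(\Omega_s)$ is differentiable at $s(t)$ and the preceding lemma's inequality holds there. The chain rule then gives
\[
\frac{\der}{\der t}|\partial E_t|
=\frac{\der}{\der s}P(\Omega_s)\Big|_{s=s(t)}\, s'(t)
=\Big(-\frac{\der}{\der s}P(\Omega_s)\Big)\frac{1}{|f'(s(t))|}
\geq d(d-1)\,\frac{W_2(\Omega_{s(t)})}{|Du|_{u=t}} ,
\]
which is \eqref{derivata_composta_super1}, since $\Omega_{s(t)}=E_t$.

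\emph{Step 3: equality case.} If $E_t$ is a ball, it is the inner parallel set of the ball $\Omega$, and all the $\Omega_s$ are concentric balls. The preceding lemma holds with equality in that case, and the chain-rule computation in Step 2 is an identity. So equality propagates to \eqref{derivata_composta_super1}.

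\emph{Main obstacle.} The delicate point is the measure-theoretic bookkeeping in Step 2. One has to ensure that the exceptional null set of $s$-values coming from the preceding lemma and from a.e.\ differentiability of $s\mapsto P(\Omega_s)$ corresponds to a null set of $t$-values. One also has to handle regular values where $s(t)$ lies near $0$ or near $r_\Omega$. I would address this with the local Lipschitz property of $f$ (Step 2), restricted to the open set of $t$ where $f'(s(t))\neq0$, and simply discard the endpoints.
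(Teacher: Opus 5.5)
Your proposal is correct and follows essentially the same route as the proof the paper relies on (the paper gives no proof and cites Bucur--Ferone--Nitsch--Trombetti and Paoli--Piscitelli--Trani): identify $E_t=\Omega_{f^{-1}(t)}$ and apply the chain rule to the preceding lemma on $-\frac{\der}{\der s}|\partial\Omega_s|$; your remark that the locally Lipschitz $f$ maps null sets of $s$-values to null sets of $t$-values correctly handles the exceptional sets. The only loose point is Step~3, where you assume without argument that $\Omega$ itself is a ball. This is not needed: at a point of differentiability the derivative equals the right derivative in $s$, and for $s>s(t)$ the convexity identity $\rho_{E_t}=\rho-s(t)$ on $E_t$ shows that $\Omega_s=(E_t)_{s-s(t)}$ are concentric balls, which gives equality directly.
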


\subsection{Spherical shells}
Given two non-negative real numbers $R_2> R_1$, 
let $A_{R_1,R_2}$ be the spherical shell of radii $R_1,R_2$
as defined in the statement of Lemma~\ref{Lem.PW}.  
For any $\alpha<0$, we consider 
the eigenvalue problem~\eqref{problem.annulus}. 
The first eigenvalue admits the variational characterisation
\begin{equation}\label{eigRN}
\mu_1^{\alpha}( A_{R_1,R_2})
=\min_{\substack{w\in W^{1,2}(A_{R_1,R_2})\\ w\not=0}}
\dfrac{\int _{A_{R_1,R_2}}|\nabla w|^2 
+\alpha\int_{\partial B_{R_2}}|w|^2 }{\int_{A_{R_1,R_2}}|w|^2} 
\,.
\end{equation}
By taking a constant trial function in~\eqref{eigRN},
we get  
\begin{equation}\label{upper}
\mu_1^{\alpha}(A_{R_1,R_2})
\le \alpha \, \frac{|\partial B_{R_2}|}{|A_{R_1,R_2}|}
\,.
\end{equation}
In particular, we observe that $\mu_1^{\alpha}(A_{R_1,R_2})$ is negative.
It is well known that the lowest eigenvalue is simple
and that the corresponding eigenfunction~$u$ 
can be chosen to be positive. 
The following proposition is easily established 
(see, \eg, \cite{paoli_trani_piscitelli}).
\begin{Proposition} \label{shell}
Let~$u_1$ be a positive minimiser of~\eqref{eigRN}. 
Then~$u_1$ is radially symmetric, in the sense that 
there exists a smooth  function  $\phi:[R_1,R_2] \to (0,\infty)$
such that $u_1(x)= \phi(|x|)$
for every $|x| \in [r_1,r_2]$. 
Moreover, $\phi$~is strictly increasing.
\end{Proposition}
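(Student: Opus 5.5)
The plan has three steps: radial symmetry from simplicity, smoothness from the radial ODE, and monotonicity from integrating that ODE.

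\emph{Radial symmetry.} The Rayleigh quotient in~\eqref{eigRN} is invariant under rotations, because $A_{R_1,R_2}$ and the outer sphere $\partial B_{R_2}$ are rotation invariant. So for any orthogonal matrix $Q$, the function $u_1\circ Q$ is again a positive minimiser. The lowest eigenvalue is simple, hence $u_1\circ Q = c\,u_1$ for some constant $c$. Comparing $L^2$ norms gives $|c|=1$, and positivity forces $c=1$. Since rotations act transitively on each sphere $\{|x|=r\}$, we get $u_1(x)=\phi(|x|)$.

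\emph{Smoothness.} By standard elliptic regularity (the boundary is smooth and the boundary conditions are smooth), $u_1$ is smooth up to the boundary. Hence $\phi\in C^\infty([R_1,R_2])$. Positivity of $\phi$ on the closed interval follows from the Hopf lemma or from uniqueness for the ODE: if $\phi(r_0)=0$ at a minimum point, then $\phi'(r_0)=0$, which would force $\phi\equiv 0$.

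\emph{Radial problem.} Writing the Laplacian in polar coordinates, $\phi$ solves
\[
  -\bigl(r^{d-1}\phi'(r)\bigr)' = \mu\, r^{d-1}\phi(r), \qquad r\in(R_1,R_2),
\]
with $\phi'(R_1)=0$ (the Neumann condition, since the outward normal on $\partial B_{R_1}$ points towards the origin) and $\phi'(R_2)+\alpha\phi(R_2)=0$.

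\emph{Strict monotonicity.} Integrate the ODE from $R_1$ to $r$ and use $\phi'(R_1)=0$:
\[
  r^{d-1}\phi'(r) = -\mu\int_{R_1}^{r} s^{d-1}\phi(s)\,\der s .
\]
By~\eqref{upper}, $\mu=\mu_1^\alpha(A_{R_1,R_2})<0$, and $\phi>0$. So the right-hand side is strictly positive for every $r\in(R_1,R_2]$. Therefore $\phi'(r)>0$ on $(R_1,R_2]$, and $\phi$ is strictly increasing on $[R_1,R_2]$. This is consistent with the Robin condition, since $\phi'(R_2)=-\alpha\phi(R_2)>0$.

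The only delicate point is justifying simplicity and positivity of the first eigenfunction. Since the paper states this as well known, I would take it as given: it follows from the standard argument that $|u|$ is also a minimiser, combined with the strong maximum principle. With that in hand, the rest is routine.
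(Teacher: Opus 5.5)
Your proof is correct and follows the standard route the paper relies on (the paper only cites the result as well known): radial symmetry from simplicity plus rotation invariance, then strict monotonicity from the integrated radial equation $r^{d-1}\phi'(r)=|\mu|\int_{R_1}^{r}s^{d-1}\phi(s)\,\der s$, which is the same identity the paper uses for the ball in the proof of Proposition~\ref{estimates_radial}. One implicit point worth flagging is that simplicity of the lowest eigenvalue requires the shell to be connected, i.e.\ $d\ge 2$; this is the paper's setting.
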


\subsection{Balls}
We continue to assume that~$\alpha$ is negative.
Let~$B_R$ be a $d$-dimensional ball of positive radius~$R$
centred at the origin of~$\mathbb R^d$. 
Recall that $\lambda_1^\alpha(B_R)$ denotes the lowest eigenvalue 
of the problem~\eqref{problem} with~$\Omega$ being replaced by~$B_R$. 
One has $\lambda_1^\alpha(B_R) = \mu_1^\alpha(A_{0,R})$.
In particular, Proposition~\ref{shell} remains valid 
for $B_R = A_{0,R}$. More specifically, one has 
\begin{equation}\label{problem.ball}
\left\{
\begin{aligned}
- r^{-(d-1)}\left( \phi'(r)r^{d-1}\right)' 
&= \lambda_1^\alpha(B_R) \phi(r) \,,
&& r\in (0,R) \,,\\
\phi'(0) &=0 \,, \\
\phi'(R)+\alpha\phi(R) &=0 \,.
\end{aligned}
\right.
\end{equation}
Recalling the notation of Proposition~\ref{shell},
we have the following comparison results.
\begin{Proposition}\label{estimates_radial}
Let $u$ be a positive eigenfunction 
corresponding to $\lambda_1^\alpha(B_R)$. 
Then 
\begin{itemize}
\item[\emph{(i)}] 
$ |\lambda_1^{\alpha}(B_R)|
\le |\alpha| \, \displaystyle \frac{d}{R} \,
\frac{\phi(R)}{\phi(0)}$,
\item[\emph{(ii)}] 
$
  \phi(R)-\phi(0)
  \le |\lambda_1^{\alpha}(B_R)| \, \phi(R) \, \displaystyle\frac{R^2}{2d}
$.
\end{itemize}
\end{Proposition}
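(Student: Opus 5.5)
Both estimates will come from integrating the radial equation \eqref{problem.ball} in its divergence form and then using the monotonicity of $\phi$ from Proposition~\ref{shell}, which applies to $B_R=A_{0,R}$. Since $\phi$ is positive and strictly increasing, $\phi(0)\le\phi(s)\le\phi(R)$ for all $s\in[0,R]$. Moreover, by \eqref{upper} with $R_1=0$ we know $\lambda:=\lambda_1^\alpha(B_R)<0$, so $|\lambda|=-\lambda$.

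For (i), I will multiply \eqref{problem.ball} by $r^{d-1}$ and integrate over $(0,R)$. The boundary term at $0$ vanishes and the Robin condition gives $\phi'(R)=-\alpha\phi(R)$, so
\[
-\phi'(R)R^{d-1}=\lambda\int_0^R\phi(r)r^{d-1}\,\der r ,
\qquad\text{that is}\qquad
\alpha\,\phi(R)\,R^{d-1}=\lambda\int_0^R\phi(r)r^{d-1}\,\der r .
\]
Taking absolute values and bounding $\phi\ge\phi(0)$ in the integral gives
\[
|\lambda|\,\phi(0)\,\frac{R^d}{d}\le|\alpha|\,\phi(R)\,R^{d-1},
\]
which is exactly (i). This step is just the eigenvalue identity $\lambda|B_R|\bar\phi=\alpha|\partial B_R|\phi(R)$ written in radial form.

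For (ii), I will integrate the same divergence identity from $0$ to $r$ instead:
\[
\phi'(r)\,r^{d-1}=-\lambda\int_0^r\phi(s)s^{d-1}\,\der s=|\lambda|\int_0^r\phi(s)s^{d-1}\,\der s\le|\lambda|\,\phi(R)\,\frac{r^d}{d},
\]
using $\phi\le\phi(R)$. Hence $\phi'(r)\le|\lambda|\phi(R)\,r/d$. Integrating over $(0,R)$ gives $\phi(R)-\phi(0)\le|\lambda|\phi(R)R^2/(2d)$, which is (ii).

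There is no real obstacle here. The only points to justify are the regularity needed to integrate at $r=0$, namely $\phi\in C^1$ with $\phi'(0)=0$ (so $r^{d-1}\phi'(r)\to0$), and the monotonicity of $\phi$ inherited from Proposition~\ref{shell}. Both are given earlier in the paper, and the rest is bookkeeping.
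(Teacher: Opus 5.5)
Your proof is correct and follows the paper's route. The paper likewise integrates the divergence form of \eqref{problem.ball} to obtain $\phi'(r)=|\lambda_1^\alpha(B_R)|\,r^{-(d-1)}\int_0^r\phi(t)\,t^{d-1}\,\der t$, and then uses only that $\phi$ is increasing: evaluating at $r=R$ with the Robin condition gives (i), and integrating the resulting bound on $\phi'$ gives (ii). You also supply two details the paper leaves implicit, namely the sign of $\lambda_1^\alpha(B_R)$ and the vanishing of $r^{d-1}\phi'(r)$ at $r=0$.
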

\begin{proof}
From~\eqref{problem.ball}, we deduce, for every $r (0,R]$,
\begin{equation*}%\label{der}
    \phi'(r)=|\lambda_1^{\alpha}(B_R)| \, r^{-(d-1)}
    \int_0^r \phi(t) \, t^{d-1} \, \der t
    \,.
\end{equation*}
Since~$\phi$ is increasing, the desired properties follow at once.
\end{proof}

%----------------%
\section{Proofs}\label{Sec.proofs}
%----------------%
%

\subsection{Proof of Lemma~\ref{Lem.PW}}
First, we establish the Payne--Weinberger-type result  
stated in Lemma~\ref{Lem.PW}.
The Dirichlet and Robin cases for two-dimensional 
(not necessarily convex, it is enough to assume simply-connected) domains
are originally due to~\cite{Payne-Weinberger_1961} and~\cite{FK7},
respectively. 
The three-dimensional case 
(for convex or axiconvex domains)
was settled in~\cite{Vikulova_2022}.
The main methodological novelty of the present paper
is the extension of the Payne--Weinberger result
to higher dimensions $d \geq 4$. 
We are inspired by the approaches of 
\cite{Bucur-Ferone-Nitsch-Trombetti_2019}, 
\cite{paoli_trani_piscitelli} and~\cite{GKP}
developed for other constraints
or different geometric and analytic settings, respectively.  
 
%\begin{proof}[Proof of Lemma~\ref{Lem.PW}] 
Let $\phi$ be the increasing function from Proposition~\ref{shell},
related with the radial positive eigenfunction corresponding to
$ \lambda_1^{\alpha}(A_{R_1,R_2})$. 
For simplicity, we abbreviate $A_{R_1,R_2} =: A$, 
$\phi(R_2)=:\phi_M$ and $\phi(R_1)=:\phi_m$.  
In~$\Omega$, we construct the trial function 
\begin{equation*}
u(x):=
\begin{cases}
G(\rho(x)) & \text{if} \quad  \rho(x)< R_2-R_1 \,, 
\\
\phi_m & \text{if} \quad \rho(x)\geq R_2-R_1 \,,
\end{cases}
\end{equation*}
where~$\rho$ is the distance function to the boundary of~$\Omega$ 
and $G:[0,R_2-R_1] \to [\phi_m,\phi_M]$ is defined by
%\begin{equation*}
    $G(s):=\phi(R_2-s)$.
%\end{equation*}
%
Equivalently
	\begin{equation*}%\label{G_t_max}
		G^{-1}(t) = \int_{t}^{\phi_M}\dfrac{\der\tau}{g(\tau)} 
		\,,
	\end{equation*}
with $g:[\phi_m,\phi_M] \to (0,\infty)$ 
defined by $g(t) := |\phi'|_{\phi=t}$.
We observe that $u$ satisfies the following properties: 
\begin{align*}
u\in W^{1,2}(\Omega), 
&&&
u_m:=\inf_\Omega u \geq \phi_m, 
\\
|\nabla u|_{u=t}=g(t), 
&&&
u_M:=\sup_\Omega u = \phi_M=G(0).
\end{align*}

For $t\in[\phi_m,\phi_M]$ define 
\begin{equation*}
E_t:=\{x\in\Omega:\ u(x)<t\},
\qquad
A_t:=\{x\in A:\ \phi(|x|)<t\},
\qquad
B_t:=A_t\cup \overline B_{R_1}.
\end{equation*}
Thus $B_t$ is a ball of radius $\phi^{-1}(t)$.
Since $E_t$ and $B_t$ are convex sets, Lemma~\ref{lem_der_per_e}
and the Aleksandrov--Fenchel inequality yield, 
for a.e.\ $t\in(u_m,\phi_M)$,
	\begin{equation*}
	\dfrac{\der}{\der t}|\partial  E_{t}|
	\geq d (d-1)\dfrac{W_2( E_{t})}{g(t)}
	\geq d(d-1)d^{-\frac{d-2}{d-1}} |B_1|^{\frac{1}{d-1}}
	\dfrac{
	|\partial  E_{t}|^{\frac{d-2}{d-1}}}{g(t)}
	\,,
	\end{equation*}
whereas equality holds for the balls $B_t$:
\begin{equation*}
\frac{\der}{\der t}|\partial B_t|
= d(d-1)d^{-\frac{d-2}{d-1}} |B_1|^{\frac{1}{d-1}}
\frac{|\partial B_t|^{\frac{d-2}{d-1}}}{g(t)}.
\end{equation*}
	Since, by hypothesis, $|\partial\Omega|=|\partial B_{R_2}|$, 
	using  a comparison type theorem, we obtain 
	\begin{equation}\label{eq:perimeter-comparison-eigen}
	 |\partial  E_{t}|
	 \leq 
	 |\partial B_{t}|
	\end{equation} 
	for a.e.\ $t\in(u_m,\phi_M)$. 
Define $\mu(t):=|E_t|$ and $\eta(t):=|A_t|$.
Using the coarea formula and~\eqref{eq:perimeter-comparison-eigen},
we obtain, for a.e.\ $t\in(u_m,\phi_M)$,
\begin{equation*}
\mu'(t)
=\int_{\{ u=t\}}\frac{1}{|\nabla u(x)|}\, \der \mathcal{H}^{d-1}
= \frac{|\partial E_t|}{g(t)}
\leq \frac{|\partial B_t|}{g(t)}
=\int_{\{ \phi=t\}}\frac{1}{|\nabla\phi(x)|}\, \der\mathcal{H}^{d-1}
=\eta'(t).
\end{equation*}
Moreover, $\mu(\phi_M)=|\Omega|=|A|=\eta(\phi_M)$.
Integrating the last inequality from $u_m$ to $\phi_M$ gives
\begin{equation*}
|\Omega|=\mu(\phi_M)-\mu(u_m)
\leq \eta(\phi_M)-\eta(u_m)=|A|-\eta(u_m).
\end{equation*}
Since $|\Omega|=|A|$, we get $\eta(u_m)=0$, and hence $u_m=\phi_m$.
Integrating instead from an arbitrary $t\in[\phi_m,\phi_M)$ to $\phi_M$, we infer that
\begin{equation*}
|\Omega|-\mu(t)\leq |A|-\eta(t),
\end{equation*}
and therefore $\mu(t)\geq\eta(t)$ for $\phi_m\leq t<\phi_M$.
Using the coarea formula again, we get
\begin{equation}\label{gradient_estimates_e-}
\int_{\Omega} |\nabla u|^2  \, \der x
=  \int_{\phi_m}^{\phi_M} g(t) \, |\partial E_t| \, \der t
\leq \int_{\phi_m}^{\phi_M} g(t) \, |\partial B_t| \, \der t
=\int_{A}|\nabla \phi|^2 \, \der x.
\end{equation}	
Since, by construction, $u(x)=u_M=\phi_M$ on $\partial\Omega$, it holds
\begin{equation}\label{termine_bordo_Me}
\int_{\partial \Omega}u^2\,\der\mathcal{H}^{d-1}
=u_M^2 \, |\partial\Omega|
= \phi_M^2 \, |\partial B_{R_2}|
=\int_{\partial B_{R_2}}\phi^2\, \der\mathcal{H}^{d-1}.
\end{equation}
Consequently,
\begin{equation}\label{L_p_estimates_neg_e-}
\int_{\Omega}u^2 \, \der x 
= u_M^2|\Omega|-\int_{\phi_m}^{\phi_M}2t\mu(t) \, \der t 
\leq    \phi_M^2|A|-\int_{\phi_m}^{\phi_M}2t\eta(t)\, \der t
= \int_{A}\phi^2\, \der x.
\end{equation}
Finally, by~\eqref{gradient_estimates_e-}, \eqref{termine_bordo_Me} and~\eqref{L_p_estimates_neg_e-},
and since the numerator on the shell is negative, we have
\begin{equation*}
\lambda_1^{\alpha}(\Omega)
\leq \frac{\int_{\Omega}|\nabla u|^2\,\der x
+\alpha\int_{\partial \Omega}u^2\,\der\mathcal{H}^{d-1}}
{\int_{\Omega}u^2\,\der x}
\leq \frac{\int_{A}|\nabla\phi|^2\,\der x+\alpha\int_{\partial B_{R_2} }\phi^2\,\der\mathcal{H}^{d-1}}{\int_{A}\phi^2\,\der x}
=\mu_1^{\alpha}(A).
\end{equation*}	
This concludes the proof of Lemma~\ref{Lem.PW}.
\qed
%\end{proof}
%

\subsection{Proof of Theorem~\ref{Thm.main}}
Given any bounded convex domain~$\Omega\subset\mathbb{R}^d$, $d\geq 2$,
let the ball~$B_R$ be such that $|\Omega| = |B_R|$; 
the radius~$R$ depends solely on the volume~$|\Omega|$ and dimension~$d$. 
Let~$A_{R_1,R_2}$ be such that $|A_{R_1,R_2}|=|\Omega|$
and $|\partial B_{R_2}| = |\partial\Omega|$;
the radii $R_1,R_2$ depend on the perimeter $|\partial\Omega|$,
the volume~$|\Omega|$ and dimension~$d$.
The volume constraint gives
\begin{equation}\label{relationship}
  R_2^d - R_1^d = R^d
  \,,
\end{equation}	
from which we express~$R_2$ as a function of~$R_1$.

By Lemma~\ref{Lem.PW}, one has
\begin{equation}\label{crucial}
  \lambda_1^{\alpha}(\Omega) - \lambda_1^{\alpha}(B_R) 
  \leq \mu_1^{\alpha}(A_{R_1,R_2}) - \lambda_1^{\alpha}(B_R) 
  =: \gamma(\alpha,R_1)
  \,. 
\end{equation}	
Here $\gamma(\alpha,R_1)$ also depends on~$|\Omega|$ and~$d$,
but this dependence is not important for our purposes. 
Indeed, our goal is to show that there exists $\alpha_0 < 0$,
depending solely on~$|\Omega|$ and~$d$,
such that $\gamma(\alpha,R_1) \leq 0$ for every $\alpha \in [\alpha_0,0]$.
So the main task is to ensure that the critical value~$\alpha_0$
can actually be made independent of~$R_1$.
We divide the proof into to regimes.

\medskip
\noindent
\fbox{Case $R_1 \geq R$.}
By~\eqref{upper}
and the second asymptotic formula in~\eqref{analytic},
one has 
\begin{equation*} 
\begin{aligned}
  \gamma(\alpha,R_1) \leq 
  \alpha \, \frac{|\partial B_{R_2}| - |\partial B_{R}|}{|B_R|}
  + O(\alpha^2)
  \,,
\end{aligned}  
\end{equation*}	
where the error term $O(\alpha^2)$ depends solely on~$|\Omega|$ and~$d$.
Since $R_2 \geq 2^\frac{1}{d} R$, there clearly exists a negative~$\alpha_1$,
depending solely on~$|\Omega|$ and~$d$,
such that $\gamma(\alpha,R_1) \leq 0$ for every $\alpha \in [\alpha_1,0]$.

It is also clear that this asymptotic argument cannot be used
without a restriction on~$R_1$. 
Indeed, $|\partial B_{R_2}| \to |\partial B_{R}|$ as $R_1 \to 0$,
which makes~$\alpha_1$ necessarily dependent also on~$R_1$
(and thus on~$|\partial\Omega|$). 
A more subtle argument is needed to cover the spherical shells
close to the ball~$B_R$.  

\medskip
\noindent
\fbox{Case $R_1 < R$.}
Recall the notation of Proposition~\ref{shell}.
Let~$u$ be the positive first eigenfunction of~$B_R$
normalised by $u(0)=1$. Then also $\phi(0)=1$.
For $\alpha=0$ the problem~\eqref{problem.annulus}
becomes uniformly Neumann,
for which the first eigenfunction is constant. 
Consequently, using an analytic dependence of~$u$ on~$\alpha$,
there exists a negative~$\alpha^*$, depending only on~$R$ and~$d$, 
such that 
\begin{equation}\label{osc}
1 \le \phi(r) \le 2, \quad \forall r \in[0,R],\,\,\,\forall \alpha \in [\alpha^*,0].
\end{equation}
For the rest of the proof, let us abbreviate 
$\mu_1^\alpha(A_{R_1,R_2}) =: \mu(\alpha,R_1)$
and $\lambda_1^\alpha(B_R) =: \lambda(\alpha)$.
Recall that~$R_2$ depends on~$R_1$ through~\eqref{relationship}.

Now we define a radial test function $w \in W^{1,2}(A_{R_1,R_2})$ 
in the following way: 
\[
w(x):=
\begin{cases}
\phi(|x|) & \mbox{if}\quad  R_1 \le |x| \le R,
\\
\phi(R) & \mbox{if}\quad R\le |x| \le R_2.
\end{cases}
\]
Using this trial function in the variational characterisation~\eqref{eigRN},
we obtain 
\begin{equation}\label{quotient}
    \mu(\alpha,R_1)
    \le
    \frac{\int_{R_1}^{R} \phi'(r)^2 \, r^{d-1}\,\der r
    +\alpha \, R_2^{d-1} \, \phi(R)^2}
    {\int_{R_1}^{R}\phi(r)^2 \, r^{d-1}\,\der r
+\phi(R)^2\int_R^{R_2} r^{d-1}\,\der r}.
\end{equation}
Since~$\phi$ is the first ball eigenfunction, 
we have 
\begin{equation}\label{eq:ball-radial-identity}
\int_0^R \phi'(r)^2 \, r^{d-1}\,\der r
+\alpha \, R^{d-1} \, \phi(R)^2
=\lambda(\alpha) \int_0^R \phi(r)^2 \, r^{d-1}\,\der r.
\end{equation}
Consequently, 
\begin{equation}
     \mu(\alpha,R_1) \le 
     \dfrac{ \lambda(\alpha)\int_0^R \phi(r)^2 \, r^{d-1} \, \der r
     -\int_0^{R_1}|\phi'(r)|^2 \, r^{d-1}\,\der r
     +\alpha \, (R_2^{d-1}-R^{d-1}) \, \phi(R)^2}
     {\int_{R_1}^{R}\phi(r)^2 \, r^{d-1}\,\der r
+\phi(R)^2\int_R^{R_2} r^{d-1}\,\der r}
     \,.
\end{equation}
Subtracting $\lambda(\alpha)$, we obtain
\begin{equation*}%\label{rayleigh}
    \gamma(\alpha,R_1) \leq \frac{N}{D},
\end{equation*}
where 
\begin{align*}%\label{num}
D &:=\int_{R_1}^R \phi(r)^2 \, r^{d-1}\,\der r
    + \phi(R)^2 \int_R^{R_2} r^{d-1}\,\der r,
\\
N&:=-\int_0^{R_1} \phi'(r)^2 \, r^{d-1}\,\der r
+\alpha\, (R_2^{d-1}-R^{d-1}) \, \phi(R)^2 
%\notag\\ &\phantom{{}={}}\ 
\\
& \qquad
+\lambda(\alpha)\left(\int_0^{R_1} \phi(r)^2 \, r^{d-1}\,\der r
-\phi(R)^2\int_R^{R_2} r^{d-1}\,\der r \right).
\end{align*}
It remains to ensure that the numerator~$N$ is non-positive,
under a suitable smallness condition on~$\alpha$. 

First of all, 
we note that the volume constraint~\eqref{relationship} implies 
\begin{equation*}%\label{constraint}
\int_R^{R_2} r^{d-1}\,\der r
=\frac{R_2^d-R^d}{d}
=\frac{R_1^d}{d}
=\int_0^{R_1} r^{d-1}\,\der r.
\end{equation*}
Consequently,
\begin{equation*}%\label{eq:mass-difference}
\int_0^{R_1}\phi(r)^2 \, r^{d-1}\,\der r
-\phi(R)^2 \int_R^{R_2} r^{d-1}\,\der r
=\int_0^{R_1}\bigl(\phi(r)^2-\phi(R)^2\bigr) \, r^{d-1}\,\der r.
\end{equation*}
Second, by~\eqref{relationship}, we have
\begin{equation*} 
R_2^{d-1}-R^{d-1}
=(R^d+R_1^d)^{\frac{d-1}{d}}-R^{d-1}
=R^{d-1}\left(\left(1+\left(\frac{R_1}{R}\right)^d\right)^{\frac{d-1}{d}}-1\right) \ge  \dfrac{(d-1) \, 2^{-1/d} }{d R} \, R_1^d
\,.
\end{equation*}
Consequently, using in addition 
Propositions~\ref{shell} and~\ref{estimates_radial}, 
we get 
\begin{equation*} 
\begin{aligned}
    N  
    &\leq \alpha \, \dfrac{(d-1) \, 2^{-1/d}}{d R} \, R_1^d \, \phi(R)^2
    + |\lambda(\alpha)| \int_0^{R_1}
    \bigl(\phi(R)^2-\phi(r)^2\bigr) \, r^{d-1}\,\der r
    \\
    &\leq \alpha \, \dfrac{(d-1) \, 2^{-1/d}}{d R} \, R_1^d \, \phi(R)^2
    + |\alpha| \, \dfrac{d}{R} \, 
    \dfrac{\phi(R)}{\phi(0)}\dfrac{R_1^d}{d} \, 2\phi(R) \, (\phi(R)-\phi(0))
    \notag\\
    & \leq 
    \alpha \, \dfrac{(d-1) \, 2^{-1/d}}{d R} \, R_1^d \, \phi(R)^2
    + |\alpha|^2 \, \dfrac{\phi(R)^4}{\phi(0)^2} \,  R_1^d 
     \notag\\& =
      |\alpha| \, R_1^d \, \phi(R)^2
     \left(
     -\frac{(d-1) \, 2^{-1/d}}{dR}
     + |\alpha| \, \dfrac{\phi(R)^2}{\phi(0)^2}
     \right) 
     .
\end{aligned}
\end{equation*}
Recalling~\eqref{osc} and the normalisation $\phi(0)=1$, 
it follows that~$N\leq 0$ provided that  
$$
  |\alpha| \leq 
  \min\left\{
  |\alpha^*|,
  \frac{(d-1) \, 2^{-1/d}}{4dR} 
  \right\} =: -\alpha_2.
$$
Obviously, $\alpha_2$ depends solely on~$R$ and~$d$.

\medskip
\noindent
\fbox{Arbitrary $R_1$.}
Combining the previous cases, 
it follows that $\gamma(\alpha,R_1) \leq 0$
whenever $\alpha \in [\alpha_0,0]$
with $\alpha_0 := \max\{\alpha_1,\alpha_2\}$.
This concludes the proof of Theorem~\ref{Thm.main}.
\qed

\subsection*{Acknowledgement}
N.~G.\ and G.~P.\  were partially supported by Gruppo Nazionale per l'Analisi Matematica, la Probabilit?? e le loro Applicazioni
(GNAMPA) of Istituto Nazionale di Alta Matematica (INdAM).
D.K.\ was supported
by the grant no.~26-21940S
of the Czech Science Foundation.

G.P was supported by COST Action 24122 mSPACE, supported by COST (European Cooperation in Science and Technology), www.cost.eu

%\newpage
\vfill  
%--------------%
% BIBLIOGRAPHY %
%--------------%
%
%\addcontentsline{toc}{section}{References}
\bibliographystyle{amsplain}
\bibliography{ebiblio09}

\end{document}